\DeclareMathOperator{\pnt}{\raise 0.5mm \hbox{\large\textbf{.}}}
\newcommand{\note}[2][ ]{}
\newtheorem{theorem}{Theorem}
\newtheorem{lemma}[theorem]{Lemma}
\newtheorem{problem}[theorem]{Problem}
\theoremstyle{definition}
\newtheorem{remark}[theorem]{Remark}
\newtheorem{question}[theorem]{Question}
\begin{document}
\title[On the number of even values of an eta-quotient]{On the number of even values of an eta-quotient}
\author{Fabrizio Zanello} \address{Department of Mathematical Sciences\\ Michigan Tech\\ Houghton, MI  49931}
\email{zanello@mtu.edu}
\thanks{2020 {\em Mathematics Subject Classification.} Primary: 11P83; Secondary: 05A17, 11P82, 11F33.\\\indent 
{\em Key words and phrases.} Partition function; eta-quotient; binary $q$-series; modular form; parity of the partition function.}

\maketitle

\begin{abstract}
The goal of this note is to provide a general lower bound on the number of even values of the Fourier coefficients of an arbitrary eta-quotient $F$, over any arithmetic progression. Namely, if $g_{a,b}(x)$ denotes the number of even coefficients of $F$ in degrees $n\equiv b$ (mod $a$) such that $n\le x$, then we show that $g_{a,b}(x) / \sqrt{x}$ is unbounded for $x$ large.

Note that our result is very close to the best bound currently known even in the special case of the partition function $p(n)$ (namely, $\sqrt{x}\log \log x$, proven by Bella\"iche and Nicolas in 2016). Our argument substantially relies upon, and generalizes, Serre's classical theorem on the number of even values of $p(n)$, combined with a recent modular-form result by Cotron \emph{et al.} on the lacunarity modulo 2 of certain eta-quotients.

Interestingly, even in the case of $p(n)$ first shown by Serre, no elementary proof is known of this bound. At the end, we propose an elegant problem on quadratic representations, whose solution would finally yield a modular form-free proof of Serre's theorem.
\end{abstract}

\section{Introduction and preliminaries}

The goal of this brief note is to present a general result on the longstanding problem of estimating the number of even values of the Fourier coefficients of arbitrary eta-quotients (see below for the relevant definitions). In fact, we will do so over any arithmetic progression. Namely, denoting by $g_{a,b}(x)$ the number of even coefficients of an eta-quotient $F$ in degrees $n\equiv b$ (mod $a$) such that $n\le x$, in Theorem  \ref{main} we show that
$$\frac{g_{a,b}(x)}{\sqrt{x}}$$
is unbounded for $x$ large.

Our paper was originally motivated by the  preprint \cite{zhe}, which asked whether a specific eta-quotient assumes infinitely many even, and infinitely many odd values over any arithmetic progression. (See Conjecture 2 of the arXiv version v3 of \cite{zhe}, which has since been updated because of a mistake in one of the proofs, unrelated to our own paper. We thank the author for informing us.) Our Theorem \ref{main} positively answers the even part of that conjecture as a very special case; for the odd part, see Question \ref{odd} at the end of this note, again in the much broader framework of arbitrary eta-quotients.

The proof of Theorem \ref{main} is substantially based upon, and generalizes, Serre's classical theorem \cite{ser} on the parity of the ordinary partition function $p(n)$ (in fact, of a broader class of functions) over any arithmetic progression, combined with a recent result by Cotron \emph{et al.} \cite{CMSZ} on the lacunarity modulo 2 of eta-quotients satisfying a suitable technical assumption. The use of the latter result, which implicitly requires the theory of modular forms, will constitute the only non-elementary portion of our argument. 

We note that even for the special case of $p(n)$, Serre's proof also required modular forms in an essential fashion. In fact, while it is easy to see that the number of even values of the partition function for $n\le x$ has order \emph{at least} $\sqrt{x}$, no elementary proof is known to date that this number grows \emph{faster} than $\sqrt{x}$. At the end of this paper, we propose a problem, phrased entirely in terms of quadratic representations, whose solution would finally yield a modular form-free proof of Serre's theorem for $p(n)$.

We first briefly recall the main definitions. We refer the reader to, e.g., \cite{KeZa} and its references for any unexplained terminology. Set $f_j=f_j(q) = \prod_{i\ge 1} (1-q^{ji})$. Then an \emph{eta-quotient} is a quotient of the form
\begin{equation}\label{etaq}
F(q)=\frac{\prod_{i=1}^uf_{\alpha_i}^{r_i}}{\prod_{i=1}^tf_{\gamma_i}^{s_i}},
\end{equation}
for integers $\alpha_i$ and $\gamma_i$ positive and distinct, $r_i, s_i > 0$, and $u,t\ge 0$. (Note that, for simplicity, here we omit the extra factor of $q^{(1/24) \left(\sum \alpha_i r_i - \sum \gamma_i s_i\right)}$ that appears in some definitions of $F$, since this factor is irrelevant for the asymptotic estimates of this paper.)

We say that $F(q)=\sum_{n\ge 0}a_nq^n$ is \emph{odd with density} $\delta$ if the number of odd coefficients $a_n$ with $n\le x$ is asymptotic to $\delta x$, for $x$ large. Further, $F$ is \emph{lacunary modulo 2} if it is odd with density zero (equivalently, if its number of odd coefficients is $o(x)$).

One of the best-known instances of an eta-quotient (\ref{etaq}) is arguably
$$\frac{1}{f_1}=\sum_{n\ge 0} p(n)q^n.$$
Understanding the parity of $p(n)$ is a horrendously difficult and truly fascinating problem, which has historically attracted the interest of the best mathematical minds. A classical conjecture by Parkin-Shanks \cite{Calk,PaSh} predicts that $p(n)$ is odd with {density} $1/2$ (see \cite{JKZ,JZ,KeZa} for generalizations of this conjecture). However, the best bounds available today, obtained after a number of incremental results, only guarantee that the even values of $p(n)$ are of order at least $\sqrt{x}\log \log x$ \cite{BelNic}, and the odd values at least $\sqrt{x}/\log \log x$ \cite{BGS}.

Thanks to theorems by Ono \cite{Ono2} and Radu \cite{Radu}, we also know that $p(n)$ assumes infinitely many odd, and infinitely many even values over any arithmetic progression. In fact, as we mentioned earlier, Serre's result \cite{ser} established that the number of even values of $p(n)$ for $n\le x$, $n\equiv b$ (mod $a$) grows faster than $\sqrt{x}$, for any choice of $a$ and $b$. However, it is reasonable to believe, as a generalization of the Parkin-Shanks conjecture (see \cite{NiSa}), that $p(n)$ is even with density $1/2$ {over any arithmetic progression}. For a broader set of conjectures on the parity of eta-quotients, including their behavior over arithmetic progressions, see our recent paper with Keith (\cite{KeZa}, Conjecture 4).

In view of the above, it appears that our bound of Theorem \ref{main} --- which holds in full generality for any eta-quotient, and is very close to the best known result even in the case of $p(n)$ \cite{BelNic} --- might be hard to improve significantly with the existing technology.

\section{The bound}

We first need a recent theorem by Cotron \emph{et al.}, which we restate in the following terms:
\begin{lemma}[\cite{CMSZ}, Theorem 1.1]\label{cot}
Let $F(q)=\frac{\prod_{i=1}^uf_{\alpha_i}^{r_i}}{\prod_{i=1}^tf_{\gamma_i}^{s_i}}$ be an eta-quotient as in (\ref{etaq}), and assume that
$$\sum_{i=1}^u \frac{r_i}{\alpha_i} \ge \sum_{i=1}^t s_i\gamma_i.$$
Then $F$ is lacunary modulo 2.
\end{lemma}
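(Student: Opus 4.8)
The plan is to show that, modulo $2$, $F$ becomes the $q$-expansion of a \emph{holomorphic} modular form of positive weight on a congruence subgroup, and then to invoke Serre's theorem that the Fourier coefficients of such a form are almost all even. Two reductions are available at the outset, each of which preserves lacunarity modulo $2$: multiplication of $F$ by a power of $q$, and replacement of $F$ by $F^{2^M}$ (because $\big(\sum_n a_n q^n\big)^2 \equiv \sum_n a_n q^{2n} \pmod 2$). Using these, and the standard criterion of Gordon--Hughes--Newman and Ligozat for an eta-quotient to be modular, one arranges that $G := q^{c} F^{2^M}$ is a holomorphic modular form of positive weight $k$ on $\Gamma_0(N)$ --- with a quadratic character or $\eta$-multiplier, with integral $q$-expansion, and with $N$ a common multiple of the $\alpha_i$ and $\gamma_i$.

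The only place the hypothesis enters is in checking that $G$ is holomorphic \emph{at every cusp} (holomorphy at $\infty$ being arranged by the shift $q^c$). Writing the underlying eta-quotient as $\prod_\delta \eta(\delta z)^{r_\delta}$ with $r_\delta = r_i > 0$ for $\delta = \alpha_i$ and $r_\delta = -s_i < 0$ for $\delta = \gamma_i$, its order of vanishing at a cusp $a/c$ of $\Gamma_0(N)$, with $c \mid N$, is a positive multiple of
\[
\sum_\delta \frac{\gcd(c,\delta)^2}{\delta}\, r_\delta = \sum_{i=1}^u \frac{\gcd(c,\alpha_i)^2}{\alpha_i}\, r_i - \sum_{i=1}^t \frac{\gcd(c,\gamma_i)^2}{\gamma_i}\, s_i .
\]
Bounding $\gcd(c,\alpha_i)^2 \ge 1$ in the first sum and $\gcd(c,\gamma_i)^2 \le \gamma_i^2$ in the second, this quantity is at least $\sum_i r_i/\alpha_i - \sum_i \gamma_i s_i$, which is $\ge 0$ \emph{exactly} because of the assumed inequality. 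Hence $G$ is holomorphic at all cusps, so it is a genuine holomorphic modular form.

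It remains to show that a holomorphic modular form $G = \sum_n b_n q^n$ of positive weight on a congruence subgroup, with integer Fourier coefficients, is lacunary modulo $2$; since $b_n$ agrees with $a_n$ along the relevant subsequence, this finishes the proof. Decompose $G = G_{\mathrm{cusp}} + G_{\mathrm{Eis}}$. For the cuspidal part, Serre's theorem --- that an integral-weight cusp form on a congruence subgroup has almost all coefficients divisible by any fixed prime, in particular by $2$ --- is the one genuinely non-elementary ingredient, resting on Deligne's bounds, the finiteness of the image of the attached mod-$2$ Galois representation, and a sieve that passes from primes to all integers. For the Eisenstein part, its coefficients are (up to bounded constants) twisted divisor sums $\sigma_{k-1,\psi_1,\psi_2}(n)$, and modulo $2$ these count certain odd divisors of $n$; an elementary multiplicativity computation shows the count is odd only on a set of $n$ of density zero. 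Thus $G$, and therefore $F$, is lacunary modulo $2$. I expect the main obstacle to be not a single deep step but the first one: genuinely realizing the eta-quotient as an honest holomorphic modular form with integral $q$-expansion (controlling the $q$-shift, the possibly half-integral weight, and the multiplier system), which is precisely the point at which replacing $F$ by a sufficiently high $2$-power --- harmless modulo $2$ --- does real work.
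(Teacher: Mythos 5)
The paper offers no proof of this lemma: it is imported verbatim from Cotron \emph{et al.} (\cite{CMSZ}, Theorem 1.1), so there is no internal argument to compare against, and your sketch in fact reconstructs essentially the proof given in that reference --- raise $F$ to a power of $2$ (harmless modulo $2$), apply the Ligozat/Gordon--Hughes--Newman order formula at the cusps, and note that the bounds $\gcd(c,\alpha_i)^2\ge 1$ and $\gcd(c,\gamma_i)^2\le \gamma_i^2$ reduce holomorphy at every cusp precisely to the hypothesis $\sum_i r_i/\alpha_i\ge\sum_i s_i\gamma_i$, after which Serre's divisibility theorem for integral-weight forms finishes the argument. The step you flag as the main obstacle (the multiplier system, the $q$-shift, and integrality of the weight) is handled in the standard way by the dilation $q\mapsto q^{24}$, which preserves lacunarity and forces the mod-$24$ congruence conditions of the modularity criterion; note also that Serre's theorem applies to an arbitrary holomorphic form of integral weight at least one, so your cuspidal/Eisenstein decomposition, while correct, is not needed. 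The only loose end is the degenerate case of weight zero, which under the stated hypothesis forces $F$ to be constant and hence trivially lacunary.
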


We are now ready for the main result of this note. In what follows, given two power series $A(q) = \sum_{n\ge 0} a(n) q^n$ and $B(q) = \sum_{n\ge 0} b(n) q^n$, if we write $A(q) \equiv B(q)$ we always mean that $a(n) \equiv b(n)$ (mod 2), for all $n$.
\begin{theorem}\label{main}
Let $F(q)=\sum_{n\ge 0}a_nq^n$ be an eta-quotient as in (\ref{etaq}), and denote by $g_{a,b}(x)$ the number of even values of $a_n$ over the arithmetic progression $n\equiv b$ (mod $a$), for $n\le x$. Then
$$\lim_{x\rightarrow \infty} \frac{g_{a,b}(x)}{\sqrt{x}}=\infty.$$
\end{theorem}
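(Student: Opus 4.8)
My plan is to reduce the general eta-quotient $F$ to a situation controlled by two ingredients already quoted: Serre's theorem (which underlies the $\sqrt{x}$-type lower bounds for the even values of $p(n)$, and more generally of series whose reciprocal is a power of $f_1$, or similar), and the lacunarity criterion of Lemma~\ref{cot}. The key observation is that the parity of the coefficients of $F$ only depends on $F \pmod 2$, and that modulo $2$ the factors $f_j$ satisfy the Frobenius-type relation $f_j(q) = \prod_{i\ge 1}(1-q^{ji}) \equiv \prod_{i\ge 1}(1-q^{2ji}) \cdot (\text{correction}) $; more usefully, $f_j^2 \equiv f_{2j} \pmod 2$, so any even exponent $r_i$ or $s_i$ can be ``folded'' into a factor with doubled index. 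Iterating, I may assume every exponent appearing in $F$ is odd, and in fact I will manipulate $F$ modulo $2$ into the shape $f_1^{-1}\cdot G(q)$ or $f_1^{\pm 1}\cdot G(q)$ with $G$ an eta-quotient of a controlled type, so that Serre's theorem applies to one ``bad'' factor that forces many even coefficients, while the remaining factor is either handled directly or shown to be lacunary.

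The core dichotomy is the following. Write the ``Cotron quantity'' $\Delta(F) = \sum_{i=1}^u r_i/\alpha_i - \sum_{i=1}^t s_i\gamma_i$. \emph{Case 1: $\Delta(F)\ge 0$.} Then by Lemma~\ref{cot}, $F$ is lacunary modulo $2$, i.e.\ it has $o(x)$ odd coefficients up to $x$; consequently it has $x - o(x)$ even coefficients up to $x$, and in any fixed progression $n\equiv b\pmod a$ it has $x/a - o(x)$ even coefficients, which is certainly $\gg \sqrt{x}$, indeed $\gg x$. So $g_{a,b}(x)/\sqrt{x}\to\infty$ trivially. \emph{Case 2: $\Delta(F) < 0$.} This is the substantive case: the denominator ``dominates.'' Here I want to split off a single factor $1/f_\gamma$ (for a suitable $\gamma$ dividing into the structure) and write $F \equiv \frac{1}{f_\gamma} \cdot H(q) \pmod 2$, where $H$ is an eta-quotient with $\Delta(H)\ge 0$, hence lacunary modulo $2$ by Lemma~\ref{cot}. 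Since $1/f_\gamma(q) = \sum_{n\ge 0} p(n) q^{\gamma n}$, and multiplication by the lacunary series $H$ changes only a density-zero set of coefficients, the even coefficients of $F$ in the progression $n\equiv b\pmod a$ are, up to an $o(x)$ error, governed by the even values of $p(n)$ in a (possibly shifted and dilated) arithmetic progression. Applying Serre's theorem to $p(n)$ over that progression gives the desired conclusion $g_{a,b}(x)/\sqrt{x}\to\infty$, because $\sqrt{x}$ beats any $o(x)$ correction and Serre already gives growth strictly faster than $\sqrt{x}$ (indeed $\gg \sqrt{x}\log\log x$ by \cite{BelNic}) for $p$.

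The main obstacle — and where I expect the real work to be — is making the reduction in Case~2 precise: showing that after the mod-$2$ manipulations ($f_j^2\equiv f_{2j}$, clearing squares, reindexing) one can always peel off exactly one reciprocal factor $1/f_\gamma$ so that the \emph{remaining} quotient $H$ lands in the Cotron-lacunary regime $\Delta(H)\ge 0$. One has to check that $\Delta(F)<0$ really does force such a $\gamma$ to exist, and to verify that the ``folding'' operation does not secretly increase the numerator weight in a way that sabotages the inequality for $H$; this is a finite bookkeeping argument on the exponents $r_i,s_i$ and indices $\alpha_i,\gamma_i$, but it needs care. A secondary technical point is the bookkeeping for arithmetic progressions: multiplying $\sum p(n)q^{\gamma n}$ by $H(q)=\sum_{m} h(m) q^m$ gives coefficient $\sum_{\gamma n + m = N} p(n) h(m)$, and one must argue that even-ness of $a_N$ is, outside a density-zero set of $N$, equivalent to even-ness of $p$ at the corresponding argument — this uses that $H$ has only finitely many odd coefficients in any window of bounded size on average, i.e.\ genuine density-zero, together with Serre's flexibility over \emph{arbitrary} progressions so that the dilation by $\gamma$ and shift by the support of the odd part of $H$ can be absorbed. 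Once these two points are nailed down, the theorem follows by combining the two cases.
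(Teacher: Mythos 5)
Your Case 1 ($\Delta(F)\ge 0$) is fine: lacunarity modulo 2 forces $x/a - o(x)$ even coefficients in every progression. But Case 2, which is the whole content of the theorem, has two fatal gaps. First, the claimed reduction $F\equiv \frac{1}{f_\gamma}\cdot H \pmod 2$ with $\Delta(H)\ge 0$ does not exist in general. Take $F=1/f_1^3\equiv 1/(f_1f_2)$: peeling off $1/f_1$ leaves $H=1/f_2$ with $\Delta(H)=-2$, and peeling off $1/f_2$ leaves $H=1/f_1$ with $\Delta(H)=-1$; more generally any nontrivial pure reciprocal $H$ has $\Delta(H)<0$, and the folding $f_j^2\equiv f_{2j}$ leaves the denominator contribution $s\gamma$ invariant (while it \emph{decreases} the numerator contribution $r/\alpha$), so no amount of reindexing rescues the inequality. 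Second, and independently, the assertion that ``multiplication by the lacunary series $H$ changes only a density-zero set of coefficients'' is false: parity of a convolution is not governed coefficientwise by one factor. The coefficient of $q^N$ in $\frac{1}{f_\gamma}H$ is $\sum_m h(m)\,p\bigl((N-m)/\gamma\bigr)$, a sum over \emph{all} odd coefficients $h(m)$ with $m\le N$, of which there may be arbitrarily many even when $H$ is lacunary. The identity $f_1\cdot\frac{1}{f_1}=1$ is the cleanest counterexample: $f_1$ is lacunary modulo 2, yet the product has essentially no odd coefficients while $1/f_1$ conjecturally has density $1/2$ of them. So even in the favorable factorizations your argument does not control the parity of $F$.

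The paper's proof goes in the opposite direction and avoids both problems. It sets $G(q)=\frac{q^b}{1-q^a}+F(q)$, so that even values of $F$ in the progression become odd values of $G$ there, and multiplies by $f_a^{2^d}$. For $d$ large, Lemma \ref{cot} applies to $f_a^{2^d}\cdot F$ (adding $2^d/a$ to the Cotron sum eventually satisfies the inequality no matter what $F$ is --- this is how the $\Delta(F)<0$ case is absorbed), so the right-hand side has $\sim c_1x$ odd coefficients, concentrated in the progression. Since $f_a^{2^d}$ has only $O(\sqrt{x})$ odd coefficients (with a constant that shrinks as $d$ grows), and the number of odd coefficients of a product is at most the product of the numbers of odd coefficients of the factors, $G$ must have at least a constant times $2^d\sqrt{x}$ odd coefficients in the progression; letting $d\to\infty$ gives the unbounded ratio. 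Note that this upper-bound-on-a-product step is exactly the convolution inequality that is valid, in contrast to the pointwise transfer of parity your proposal relies on. If you want to salvage your outline, you would need to replace the ``peel off $1/f_\gamma$ and quote Serre'' step by some mechanism of this multiplicative kind; as written, the argument does not prove the theorem for, e.g., $F=1/f_1^3$.
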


\begin{proof}
Let
$$F(q)=\frac{\prod_{i=1}^uf_{\alpha_i}^{r_i}}{\prod_{i=1}^tf_{\gamma_i}^{s_i}}=\sum_{n\ge 0}a_nq^n$$
be as in the statement. Since
$$\frac{q^b}{1-q^a}=\sum_{j\ge 0}q^{b+ja},$$
it is clear that the coefficients of the series $G$ defined by
$$G(q)=\frac{q^b}{1-q^a}+F(q)$$
coincide with those of $F$ except precisely in degrees $n\equiv b$ (mod $a$), where they differ by 1. In particular, $F$ is even in any degree $n\equiv b$ (mod $a$) if and only if $G$ is odd in that degree.

Now fix a positive integer $d$. By definition of $G$, we have the identity:
\begin{equation}\label{ddd}
G(q) \cdot f_a^{2^d} = q^b \cdot \frac{f_a^{2^d}}{1-q^a} + f_a^{2^d} \cdot \frac{\prod_{i=1}^uf_{\alpha_i}^{r_i}}{\prod_{i=1}^tf_{\gamma_i}^{s_i}}.
\end{equation}
Using the reduction modulo 2 of Euler's Pentagonal Number Theorem (see, e.g., \cite{Andr}),
$$f_1 \equiv \sum_{n \in \mathbb Z} q^{n(3n-1)/2},$$
we obtain:
$$f_a^{2^d} \equiv \left(\sum_{n \in \mathbb Z} q^{a n(3n-1)/2}\right)^{2^d}\equiv \sum_{n \in \mathbb Z} q^{2^d\cdot a n(3n-1)/2}.$$

It follows by standard computations that, for $x$ large,  the number of odd coefficients of $f_a^{2^d}$ in degrees $n\le x$ is asymptotic to
$$\frac{c_0\sqrt{x}}{2^d},$$
where
$$c_0=\frac{2 \sqrt{2}}{a\sqrt{3}}.$$
Further, all  odd coefficients appear in degrees $n\equiv 0$ (mod $a$).

From the Pentagonal Number Theorem, we also deduce the modulo 2 identity:
$$\frac{f_1}{1-q}\equiv (1+q+q^2+q^5+q^7+q^{12}+q^{15}+q^{22}+\dots)(1+q+q^2+q^3+\dots)$$
\begin{equation}\label{23}
\equiv 1+(q^2+q^3+q^4)+(q^7+\dots+q^{11})+(q^{15}+\dots+q^{21})+\dots
\end{equation}
Note that the last series alternates strings of consecutive powers with coefficient 1 to strings (omitted) of consecutive powers with coefficient 0, where a new string begins any time a degree is a generalized pentagonal number.

Given this, it is easy to see that, for $x$ large, the number of odd coefficients of $f_1/(1-q)$ (or equivalently, the number of 1s appearing in (\ref{23})) in degrees $n\le x$ is asymptotic to $2x/3$.

Moreover, since $f_1^{2^d}\equiv f_{2^d}$  (mod 2), an entirely similar argument gives that the corresponding asymptotic value for the odd coefficients of
$$\frac{f_1^{2^d}}{1-q}$$
is again $2x/3$. Thus, if we replace $q$ with $q^a$, it is clear that the number of odd coefficients of
$$\frac{f_a^{2^d}}{1-q^a}$$
in degrees $n\le x$ is asymptotic to $c_1x$, with $c_1=2/(3a)$.

It follows that the number of odd coefficients of the first term on the right side of (\ref{ddd}),
$$q^b \cdot \frac{f_a^{2^d}}{1-q^a},$$
is asymptotic to $c_1x$. Note that these coefficients all appear in degrees $n\equiv b$ (mod $a$).

Now consider the second term on the right side of (\ref{ddd}), namely
\begin{equation}\label{444}
f_a^{2^d} \cdot \frac{\prod_{i=1}^uf_{\alpha_i}^{r_i}}{\prod_{i=1}^tf_{\gamma_i}^{s_i}}.
\end{equation}
By Lemma \ref{cot}, wo obtain that (\ref{444}) is lacunary modulo 2 whenever
$$\frac{2^d}{a}+\sum_{i=1}^u \frac{r_i}{\alpha_i} \ge \sum_{i=1}^t s_i\gamma_i,$$
or equivalently,
$$2^d \ge a \left(\sum_{i=1}^t s_i\gamma_i - \sum_{i=1}^u \frac{r_i}{\alpha_i}\right).$$
Thus, the lacunarity of (\ref{444}) is guaranteed for all integers $d$ large enough.

Putting the above together, for any large integer $d$, the number of odd coefficients on the right side of (\ref{ddd}) in degrees $n\le x$ is asymptotic to
$$c_1x+o(x),$$
or simply to $c_1x$. Asymptotically, again $c_1x$ of these odd coefficients are in degrees $n\equiv b$ (mod $a$).

Recall that the number of odd coefficients of $f_a^{2^d}$, which is the second factor on the left side of (\ref{ddd}), was shown to be asymptotic to $c_0\sqrt{x}/2^d$ (where the constant $c_0$ is independent of $d$). Also, all such coefficients appear in degrees $n\equiv 0$ (mod $a$).

We conclude that, for $n\equiv b$ (mod $a$), $n\le x$, the number of odd coefficients of the first factor, $G$, on the left side of (\ref{ddd}) --- or equivalently, the number $g_{a,b}(x)$ of even coefficients of the original eta-quotient $F$ --- must satisfy:
$$g_{a,b}(x) \ge c_2\cdot \frac{c_1x}{\left(c_0\sqrt{x}\right)/2^d}=c_3\cdot 2^d\sqrt{x},$$
for a suitable positive constant $c_2$ and for $x$ large, where
$$c_3=\frac{c_2c_1}{c_0}>0$$
is independent of $d$. Thus,
$$\frac{g_{a,b}(x)}{\sqrt{x}}\ge c_3\cdot 2^d,$$
for $x$ large. Since this is true for all $d$ large, the theorem follows.
\end{proof}

\section{Questions for future research}

As we mentioned earlier, Lemma \ref{cot}, which was a key ingredient in the proof of Theorem \ref{main}, made an essential use of modular forms. We are not aware of a modular form-free proof of our result, even for $p(n)$. In fact, Serre's argument \cite{ser} that the even values of $p(n)$, for $n\le x$, grow faster than $\sqrt{x}$ also relied on modular forms; to be precise, it can be seen that their use may essentially be limited to showing that $f_1^{2^d-1}$ is lacunary modulo 2 for infinitely many values of $d$.

We now propose a problem, elegantly stated in terms of quadratic representations, which is equivalent to the lacunarity modulo 2 of $f_1^{2^{2d}-1}$. (For brevity's sake, we omit the proof of this equivalence, which employs the Jacobi triple product identity and other elementary tools.) Thus, a direct proof of Problem \ref{pro} would lead, as a byproduct, to the first modular form-free proof of Serre's theorem in the case of $p(n)$.
\begin{problem}\label{pro}
For a positive integer $d$, consider the polynomial
$$T_d \left(x_1,\dots,x_{2^{d-1}}\right)=\sum_{i=1}^{2^{d-1}}2^{d}x_i^2-(2i-1)x_i,$$
and let
$$R_{d}(n)=\# \left\{\left(x_1,\dots,x_{2^{d-1}}\right)\in \mathbb Z^{2^{d-1}}:T_d \left(x_1,\dots,x_{2^{d-1}}\right)=n \right\}.$$
\noindent
Show that, for any $d\ge 1$,
$$\# \left\{n\le x:R_{d}(n){\ }\text{is odd} \right\}=o(x).$$
\end{problem}
\noindent
\begin{remark}\label{qr}
\begin{enumerate}
\item As we saw earlier, the lacunarity modulo 2 of $f_1^{2^{2d}-1}$, which is equivalent to the statement of Problem \ref{pro} for $d$, is already known via modular forms \cite{ser}. In fact, when $d\ge 2$, with more work (and more modular forms) one can estimate exactly that, for $x$ large, $\# \left\{n\le x :R_{d}(n){\ }\text{is odd} \right\}$ is asymptotic to
$$c_1x(\log \log x)^{c_2}/\log x,$$
for suitable constants $c_1$ and $c_2$ depending on $d$ (see \cite{NS,ser1} for details).

\item In order to reprove Serre's theorem on the even values of $p(n)$, one can show that, in fact, it suffices to solve Problem \ref{pro} only \emph{for infinitely many values of $d$}, and \emph{over any arithmetic progression $cn+r$} in lieu of $n$, provided that $c$ grow slower than $4^d$.

\item As an illustration, when $d=1$, it is easy to see that $R_{1}(n)$ is odd (it equals 1) if and only if $n=\binom{a+1}{2}$. Thus, the estimate $o(x)$ is trivial. When $d=2$, $R_{2}(n)$ is odd precisely when the number of representations of $n$ as $4\binom{a+1}{2}+\binom{b+1}{2}$ is odd. Since by a classical result of Landau \cite{Landau,ser1}, a binary quadratic form is lacunary over the integers, this is \emph{a fortiori} true modulo 2, and the result again follows.
\end{enumerate}
\end{remark}

We conclude by posing a general question on the number of \emph{odd} values of arbitrary eta-quotients over arithmetic progressions.
\begin{question}\label{odd}
\emph{Let $F(q)=\sum_{n\ge 0}a_nq^n$ be an eta-quotient as in (\ref{etaq}), and assume $F$ is not constant modulo 2 over the arithmetic progression $n\equiv b$ (mod $a$). Is it true that the number of odd values of $a_n$ for $n\equiv b$ (mod $a$), $n\le x$, has always order at least $\sqrt{x}$?}
\end{question}

Note that the lower bound of Question \ref{odd} in general cannot be improved ($\sqrt{x}$ is well known to be the correct order for the odd values of, e.g., $f_1$ and $f_3$). However, the question is still open in many important instances; among others, for the generating function $1/f_1$ of $p(n)$, and for all of its positive powers $1/f_1^t$, which define the $t$-multipartition functions $p_t(n)$ \cite{BGS,JKZ,JZ}.

\section*{Acknowledgements} We thank William Keith for useful discussions on Serre's theorem. This work was partially supported by a Simons Foundation grant (\#630401).

\end{document}